\documentclass[oneside,english,10pt]{amsart}
\pagestyle{plain}
\usepackage{color}
\usepackage{amsthm}
\usepackage{amstext}
\usepackage{graphicx}
\usepackage{amscd}
\usepackage{amsfonts}
\usepackage[colorlinks,linkcolor=blue, pdfstartview=FitH]{hyperref}
\usepackage{amssymb}
\usepackage{esint}
\usepackage{hyperref}
\usepackage{amsmath, amssymb, amsthm, mathrsfs, graphicx}
\usepackage{hyperref, enumitem, xspace, ifthen,comment}
\usepackage[all]{xy}
\usepackage{tikz}
\usetikzlibrary{arrows,shapes,trees}

\hypersetup{citecolor=red}

\begin{document}

\title[\null]
{Toric Fano manifolds with nef tangent bundles}

\author[\null]{Qilin Yang}

\address{ Department of Mathematics,
Sun Yat-Sen University, ~510275, ~ Guangzhou, ~P. R.  CHINA.} %
\email{yqil@mail.sysu.edu.cn}

\keywords{Nef tangent bundle, toric variety, Fano manifold, Campana-Peternell conjecture}
  \subjclass[2010]{14J45}

\begin{abstract}
In this note we prove that any  toric Fano manifold with nef tangent bundle is a product of projective spaces. In particular, it implies that Campana-Peternell conjecture hold for toric manifolds.
\end{abstract}

\maketitle
\baselineskip=10pt


\newcommand{\C}{\Bbb C}
\newcommand{\kv}{\mathscr{V}}
\newcommand{\kn}{\mathscr{N}}
\newcommand{\ko}{\mathscr{O}}
\newcommand{\kr}{\mathscr{R}}
\newcommand{\kj}{\mathscr{J}}
\newcommand{\kf}{\mathscr{F}}
\newcommand{\Pj}{\Bbb P}
\newcommand{\Z}{\Bbb Z}
\newcommand{\R}{\Bbb R}
\newcommand{\Q}{\ensuremath{\mathbb Q}}
\newcommand{\lnorm}{\left|\left|}
\newcommand{\rnorm}{\right|\right|}
\newcommand{\fk}{\mathfrak{k}}
\newcommand{\fp}{\mathfrak{p}}
\newcommand{\fm}{\mathfrak{m}}
\newcommand{\cf}{\mathrm{cf.}}
\newcommand\grad{\rm grad}
\newcommand\lgr{\longrightarrow}
\newcommand\rw{\rightarrow}
\newcommand\lmp{\longmapsto}
\newcommand\lie{{\cal L}{\rm ie}}
\newcommand\Lm{\Lambda}
\newcommand\lmd{\lambda}
\newcommand\lc{\lceil}
\newcommand\rc{\rceil}
\newcommand\al{\alpha}
\newcommand\bt{\beta}
\newcommand\el{\epsilon}
\newcommand\gm{\gamma}
\newcommand\dt{\delta}
\newcommand\G{\Gamma}
\newcommand\eg{e.g.}
\newcommand\ie{i.e.}
\newcommand\resp{\rm resp.}
\newcommand\Her{\rm Her}
\newcommand\Aut{\rm Aut}
\newcommand\ov{\overline}
\newcommand\be{\beta}
\newcommand\om{\omega}
\newcommand\df{\rm d}
\newcommand\bi{\bar{i}}
\newcommand\bj{\bar{j}}
\newcommand\bl{\bar{l}}
\newcommand\bk{\bar{k}}
\newcommand\bs{\bar{\xi}}
\newcommand\bp{\bar{p}}
\newcommand\gd{\rm grad}
\newcommand\lge{\langle}
\newcommand\rg{\rangle}
\newcommand\sta{\Theta_H }
\newcommand\vk{{\varkappa}_{\tau}}
\newcommand\wh{\widetilde{H}}
\newcommand\wf{\widetilde{F}}

\newcommand\End{\mbox{{\rm End}}\, }
\newcommand\Hom{\mbox{{\rm Hom}}\, }
\newcommand\Ima{\mbox{{\rm Im}}\, }
\newcommand\Ker{\mbox{{\rm Ker}}\, }

\newtheorem{theorem}{Theorem}[section]
\newtheorem{proposition}[theorem]{Proposition}
\newtheorem{corollary}[theorem]{Corollary}
\newtheorem{lemma}[theorem]{Lemma}
\newtheorem{example}[theorem]{Example}
\newtheorem{remark}[theorem]{Remark}
\newtheorem{definition}[theorem]{Definition}

\section{Notation and main result}
We will use standard notation for polytopes and toric varieties, as it can be found in \cite{cls},\cite{fu},\cite{od}.

Let $N\cong\Z^d$ be a $d$-dimensional lattice and $M=\Hom_{\Z}(N,\Z)\cong\Z^d$ the dual lattice with $\langle,\rangle$ the nondegenerate pairing. As usual,
$N_{\Q}=N\otimes_{\Z}\Q\cong\Q^d$ and $M_{\Q}=M\otimes_{\Z}\Q\cong\Q^d$ (respectively $N_{\R}$ and $M_{\R}$) will denote the rational (respectively real) scalar extensions.

A subset $P\subseteq M_{\R}$ is called a polytope if it is the convex hull of finitely many points in $M_{\R}.$
The face of $P$ is denoted by $F\preceq P.$ The set of vertices and facets of $P$ are denoted by $\kv(P)$ and $\kf(P)$ respectively. If $\kv(P)\subseteq M_{\Q}$ (respectively $\kv(P)\subseteq M$) then $P$ is called a rational  polytope (respectively a lattice polytope).

If $P$ is a rational polytope with $0\in {\rm int} P,$ the dual polytope of $P$ is defined by
$$P^*:=\{y\in N_{\R}|\langle x,y\rangle\geq -1,\forall x\in P\},$$
which is also a rational polytope with $0\in {\rm int} P^*.$ The fan $\kn_P:=\{{\rm pos} (F):F\preceq P^*\}$ is called the normal fan of $P.$ Here ${\rm pos}(F)$ denotes the cone positively generated by the face $F$ (also  called positive hull of $F$).  It is well-known that a fan $\Sigma$ in $N_{\R}$ defines a toric variety $X_{\Sigma}:=X(N,\Sigma),$
which automatically admits a torus action and has a Zariski open and  dense orbit:
   $$T_N\times X_{\Sigma}\rw X_{\Sigma},$$
   where $T_N\cong \Hom_{\Z}(M,\C^*).$
   We denote
 $X_P:=X_{\kn_P}$
 the toric variety associated with the normal fan $\kn_P$ of the polytope $P.$
 It is known that $X_P$ is nonsingular if and only if the vertices of any facet of $P^*$ form a $\Z$-basis of the lattice $M.$

A $d$-dimensional polytope $P\subseteq M_{\R}$ with $0\in {\rm int} P$  is called reflexive polytope if both $P$ and $P^*$ are lattice polytopes. A complex variety $X$  is called a Gorenstein Fano variety if $X$ is projective, normal and its anticanonical divisor is an ample Cartier divisor. The following theorem
(see \cite{ni1}) classifies  Gorenstein toric Fano varieties by reflexive polytopes:

\begin{theorem}\label{refx}Under the map $P\longmapsto X_P$ reflexive polytopes correspond uniquely up to isomorphism to Gorenstein toric Fano varieties. There are only finitely many isomorphism types of $d$-dimensional reflexive polytopes.
\end{theorem}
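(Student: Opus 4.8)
The plan is to unwind the standard dictionary of toric geometry on both sides of the correspondence, and then to establish the finiteness statement separately; this is essentially Batyrev's theorem. I will freely use that a complete fan $\Sigma$ in $N_{\R}$ yields a normal variety $X_{\Sigma}$ which is projective exactly when $\Sigma$ is the normal fan of some polytope, and that the torus-invariant anticanonical divisor of $X_{\Sigma}$ is $-K_{X_{\Sigma}}=\sum_{\rho}D_{\rho}$, the sum over the rays $\rho$ of $\Sigma$, where $D_{\rho}$ is the prime divisor of $\rho$ with primitive generator $u_{\rho}$. A first remark I would record is that if $P$ is reflexive then every vertex $v\in\kv(P^*)$ is a primitive lattice point: the facet of $P$ dual to $v$ is a nonempty face of a lattice polytope, hence contains a lattice point $x_{0}$ with $\langle x_{0},v\rangle=-1$, so writing $v=kv'$ with $v'\in N$ forces $k\mid 1$. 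Hence for reflexive $P$ the rays of $\kn_{P}$ are precisely the $\mathrm{pos}(v)$ with $v\in\kv(P^*)$ (and $u_{\mathrm{pos}(v)}=v$), while the maximal cones of $\kn_{P}$ are the $\mathrm{pos}(F)$ with $F\in\kf(P^*)$.

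Next I would show that for reflexive $P$ the divisor $-K_{X_{P}}$ is ample and Cartier, so $X_{P}$ is a Gorenstein toric Fano variety. For the Cartier property: on the maximal cone $\mathrm{pos}(F)$ attached to a facet $F\preceq P^*$, the vertex $m_{F}\in\kv(P)$ dual to $F$ lies in $M$ (as $P$ is a lattice polytope) and satisfies $\langle m_{F},v\rangle=-1$ for all $v\in\kv(F)$, which is exactly the local defining datum of $\sum_{\rho}D_{\rho}$ on that cone; this is the Gorenstein condition. For ampleness: the polytope associated with $-K_{X_{P}}$ is $\{m\in M_{\R}:\langle m,v\rangle\ge -1\text{ for all }v\in\kv(P^*)\}=(P^*)^*=P$ by biduality, and since $\kn_{P}$ is by definition the normal fan of the $d$-dimensional polytope $P$, the standard toric ampleness criterion gives that $-K_{X_{P}}$ is ample.

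Conversely, let $X=X_{\Sigma}$ be a Gorenstein toric Fano variety. Since $-K_{X}=\sum_{\rho}D_{\rho}$ is an ample Cartier divisor, $\Sigma$ is the normal fan of the anticanonical polytope $Q:=\{m\in M_{\R}:\langle m,u_{\rho}\rangle\ge -1\text{ for all rays }\rho\}$, which has $0$ in its interior; the vertices of $Q$ are exactly the points $m_{\sigma}$ attached to the maximal cones $\sigma$ by $\langle m_{\sigma},u_{\rho}\rangle=-1$ for $\rho\in\sigma(1)$, and Cartierness forces $m_{\sigma}\in M$, so $Q$ is a lattice polytope. Its dual $Q^*$ equals the convex hull of the primitive generators $u_{\rho}$ (by biduality, using that these span $N_{\R}$), hence is a lattice polytope too, so $Q$ is reflexive; and $\Sigma=\kn_{Q}$ yields $X\cong X_{Q}$. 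The assignments $P\mapsto X_{P}$ and $X\mapsto P_{-K_{X}}$ are then mutually inverse up to the evident equivalences (unimodular equivalence of polytopes on one side, toric isomorphism on the other), which is the asserted bijection.

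It remains to prove that there are only finitely many $d$-dimensional reflexive polytopes up to $\mathrm{GL}_{d}(\Z)$. First I would check that the origin is the unique interior lattice point of a reflexive polytope $P$: if $m\in M\cap\mathrm{int}\,P$ with $m\ne 0$, the ray from $0$ through $m$ meets $\partial P$ at $t_{0}m$ with $t_{0}>1$, lying on some facet $\{\langle x,v\rangle=-1\}$ with $v\in\kv(P^*)$, whence $\langle m,v\rangle=-1/t_{0}\notin\Z$, a contradiction. I would then invoke the classical volume bound (Hensley, and Lagarias--Ziegler) that a $d$-dimensional lattice polytope with exactly one interior lattice point has volume at most some constant $C(d)$, together with the finiteness, up to $\mathrm{GL}_{d}(\Z)$, of lattice polytopes of bounded volume. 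This volume bound is the only genuinely nontrivial input; the rest is bookkeeping with toric definitions, so I expect this finiteness step to be the main obstacle.
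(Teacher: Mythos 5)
Your argument is correct, and it is worth noting that the paper itself offers no proof of this statement at all: it is quoted verbatim from Nill \cite{ni1} (the result goes back to Batyrev), so there is no in-paper proof to compare against. What you wrote is essentially the standard proof of the cited result: primitivity of the vertices of $P^*$, the identification of the local Cartier data of $-K_{X_P}$ with the vertices of $P$ (the Gorenstein condition), biduality $(P^*)^*=P$ giving ampleness of $-K_{X_P}$ with anticanonical polytope $P$, the inverse construction $X\mapsto P_{-K_X}$, and finally Hensley/Lagarias--Ziegler for finiteness via the observation that $0$ is the unique interior lattice point. All of these steps check out; the volume bound is, as you say, the one genuinely deep input, and citing it is standard practice. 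The only point I would ask you to make explicit is what ``uniquely up to isomorphism'' means on the variety side: an abstract isomorphism of varieties need not respect the torus, so one should either phrase the correspondence in terms of toric (equivariant) isomorphisms and unimodular equivalence of polytopes, or invoke the fact that the open torus is a maximal torus of $\Aut(X)$ and hence unique up to conjugation, so that abstract and toric isomorphism classes coincide for these varieties. With that one clarification, your write-up is a complete and faithful proof of the quoted theorem.
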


A Cartier divisor $D$ on a nonsingular variety $X$ is called a nef divisor if the intersectional number $D\cdot C\geq 0$ for any irreducible curve $C\subset X.$
A line bundle $L$ is called a nef line bundle if the associated Cartier divisor (i.e., $L=\ko_X(D)$) is a nef divisor. A vector bundle $E$ over $X$ is called a nef vector bundle if the tautological line bundle $\ko_{\Pj(E^*)}(1)$ on the projective bundle $P(E^*)$ is a nef line bundle.  In \cite{cp}, Campana and Peternell conjectured that any  Fano manifold with nef tangent bundle is a rational homogeneous manifold. In this note we confirm this conjecture for toric Fano manifold, in fact we get more and obtain the following main theorem:

\begin{theorem}\label{main} Any  toric Fano manifold with nef tangent bundle is a product of projective spaces.
\end{theorem}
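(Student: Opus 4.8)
The plan is to reduce the statement to a positivity property of the defining fan, and then to run an induction on the Picard number governed by the elementary contractions of $X$.

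Realize $X$ as $X_P$ for a smooth reflexive polytope $P$: a toric Fano manifold is in particular a Gorenstein toric Fano variety, so Theorem~\ref{refx} applies. Put $\Sigma=\kn_P$, $n=\dim X$, and let $u_1,\dots,u_r\in N$ be the vertices of $P^*$, i.e.\ the primitive ray generators of $\Sigma$, with prime torus-invariant divisors $D_1,\dots,D_r$. The Mori cone $\ov{NE}(X)$ is polyhedral and generated by the classes of the torus-invariant rational curves $C_\tau\cong\Pj^1$ attached to the walls $\tau$ of $\Sigma$. For a wall $\tau=\sigma\cap\sigma'$ with $\sigma=\mathrm{pos}(u_{i_1},\dots,u_{i_{n-1}},u)$ and $\sigma'=\mathrm{pos}(u_{i_1},\dots,u_{i_{n-1}},u')$, the wall relation
\[
u+u'+\sum_{l=1}^{n-1}b_l\,u_{i_l}=0,\qquad b_l\in\Z,
\]
records the intersection numbers $D_j\cdot C_\tau$ (the coefficient of $u_j$), and since $\mathrm{Ext}^1_{\Pj^1}(\ko_{\Pj^1}(b_l),\ko_{\Pj^1}(2))=H^1(\Pj^1,\ko_{\Pj^1}(2-b_l))=0$, the normal sequence of $C_\tau$ splits and $T_X|_{C_\tau}\cong\ko_{\Pj^1}(2)\oplus\bigoplus_l\ko_{\Pj^1}(b_l)$. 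Hence $T_X$ nef forces $b_l\ge 0$ on every wall; equivalently $D_j\cdot C_\tau\ge 0$ for all $j$ and all $\tau$, and as the $C_\tau$ generate $\ov{NE}(X)$ we conclude that \emph{every prime torus-invariant divisor $D_j$ is nef}. (Conversely, from the generalized Euler sequence $0\rw\ko_X^{\oplus(r-n)}\rw\bigoplus_j\ko_X(D_j)\rw T_X\rw 0$ one sees that nefness of all $D_j$ implies nefness of $T_X$, so the two conditions are equivalent for a smooth complete toric variety.)

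We argue by induction on $\rho(X)$. If $\rho(X)=1$, then $X$ is a smooth complete toric variety of Picard number one, hence $X\cong\Pj^n$. If $\rho(X)\ge 2$, let $\varphi\colon X\rw X'$ be the contraction of an extremal ray $R$. A divisorial or a small toric extremal contraction has a prime torus-invariant divisor which is negative on the contracted curves (Reid's toric Mori theory); since every $D_j$ is nef, $\varphi$ is therefore of fibre type, and the same positivity rules out jumping fibre dimension, so $\varphi$ is an equidimensional fibration whose fibres are smooth toric Fano manifolds of Picard number one, i.e.\ projective spaces $\Pj^k$. Thus $\varphi$ is a $\Pj^k$-bundle and $X\cong\Pj_{X'}(E)$ for $E=\bigoplus_{l=0}^{k}\ko_{X'}(D'_l)$, with $X'$ smooth toric. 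Each quotient $E\twoheadrightarrow\ko_{X'}(D'_l)$ gives a section $s_l\colon X'\hookrightarrow X$ with $\kn_{s_l(X')/X}\cong\bigoplus_{l'\neq l}\ko_{X'}(D'_{l'}-D'_l)$; being a quotient of $T_X|_{s_l(X')}$, this normal bundle is nef, so each $D'_{l'}-D'_l$ is nef, and running over all $l,l'$ forces $D'_{l'}-D'_l$ to be numerically trivial, hence zero in $\mathrm{Pic}(X')$. Therefore $E$ is a twist of a trivial bundle and $X\cong X'\times\Pj^k$. Finally $T_X=\mathrm{pr}_{X'}^*T_{X'}\oplus\mathrm{pr}_{\Pj^k}^*T_{\Pj^k}$, so $X'$ is a smooth toric Fano manifold with nef tangent bundle and $\rho(X')=\rho(X)-1$; by induction $X'\cong\prod_i\Pj^{n_i}$, whence $X\cong\Pj^k\times\prod_i\Pj^{n_i}$. (The converse is clear, since the tangent bundle of a product of projective spaces is globally generated.)

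The technical core is the inductive step, and specifically the use of Reid's description of toric extremal contractions to show that, once no prime invariant divisor is $\varphi$-negative, the contraction $\varphi$ must be an equidimensional projective-space bundle: the bookkeeping needed to exclude divisorial, small, and non-equidimensional fibre-type contractions, and to identify the general fibre with $\Pj^k$, is where the main effort goes. The reduction in the first paragraph (``$T_X$ nef'' $\Longleftrightarrow$ ``every prime invariant divisor is nef'') is elementary, and trivialising the bundle $E$ and closing the induction are then routine.
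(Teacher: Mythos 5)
Your overall strategy is sound and genuinely different from the paper's. The paper never touches Mori theory: after showing each invariant divisor $D_\tau$ is nef (via the normal bundle sequence of the divisor rather than of the invariant curves, but to the same effect), it shows $T_X$ is Griffiths semipositive via the generalized Euler sequence, invokes Mok's uniformization theorem to get reductivity of $\Aut(X)$, checks that the Cartier data $m_\sigma$ of the nef divisors $D_\tau$ are semisimple Demazure roots, produces $d$ linearly independent ones, and concludes by Nill's criterion (Proposition~\ref{red}). Your reduction ``$T_X$ nef $\Rightarrow$ every $D_j$ nef'' is correct and is essentially the paper's first step; your closing argument (trivializing $E$ by nefness of the normal bundles of the sections, then inducting on the Picard number) is also correct, including the verification that $X'$ inherits the Fano and nefness hypotheses.

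The gap is in the middle, and you have in effect flagged it yourself: the claim that a fibre-type extremal contraction of a smooth projective toric variety with no $\varphi$-negative invariant divisor is an \emph{equidimensional, Zariski-locally trivial} $\Pj^k$-bundle over a \emph{smooth} toric $X'$ is asserted, not proved. The one sentence of justification offered --- ``the same positivity rules out jumping fibre dimension'' --- is not an argument: nefness of the $D_j$ is what forces the contraction to be of fibre type (no ray can lie in $J_-$ of the extremal wall relation), but equidimensionality and local triviality come from Reid's structure theorem for extremal rays of fibre type (every maximal cone must contain a wall whose class spans $R$, hence splits as a maximal cone of the fibre fan in $U=\mathrm{span}(u_j: j\in J_+)$ plus a cone mapping isomorphically to $N_\R/U$), and that combinatorial splitting is exactly the ``bookkeeping'' you defer. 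Until it is carried out or precisely cited, the inductive step is incomplete at its core. Two smaller points: the vanishing $H^1(\Pj^1,\ko_{\Pj^1}(2-b_l))=0$ fails when $b_l\ge 4$, so the claimed splitting of $T_X|_{C_\tau}$ is not automatic --- but it is also unnecessary, since nefness passes to the quotient $N_{C_\tau/X}\cong\bigoplus_l\ko_{\Pj^1}(b_l)$ regardless. Finally, note that the whole contraction analysis can be bypassed: once every $D_j$ is nef, every primitive relation $u_1+\cdots+u_k=\sum c_jv_j$ of $\Sigma$ must have empty right-hand side (pairing the relation's effective curve class with $D_{v_j}$ gives $c_j\le 0$ unless $v_j$ lies in the collection, which is impossible), and Batyrev's criterion then identifies $X$ as a product of projective spaces directly.
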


\section{Cartier divisors on complete toric varieties}
A fan $\Sigma$ in $N_{\R}$ is complete iff its support $|\Sigma|:=\cup_{\sigma\in\Sigma}\sigma$ is the whole space $N_{\R},$ which is also equivalent to that
the associated toric variety $X(N,\Sigma)$ is compact in classical topology {\rm (\cite[Theorem 3.1.9]{cls})}.

 Let $\Sigma(k)$ denote the set of $k$-dimensional cones of the complete fan $\Sigma.$ The elements in $\Sigma(1)$ are called rays, and given $\tau\in \Sigma(1),$ let $u_{\tau}$ denote the unique minimal generator of $N\cap \tau.$ By orbit-cone correspondence {\rm (\cite[Theorem 3.2.6]{cls})}, a ray $\tau\in \Sigma(1)$ gives a $T_N$-invariant Cartier divisor $D_{\tau}.$ On a complete toric variety
we may write any Cartier divisor as a linear combination of $T_N$-invariant Cartier divisors. Let $D=\sum_{\tau\in \Sigma(1)}a_{\tau}D_{\tau}$ be a Cartier divisor on a complete toric variety $X_{\Sigma},$ its support function $\phi_D:N_{\R}\rw \R$ is determined by the following properties:
\begin{enumerate}
\item $\phi_D$ is linear on each cone $\sigma\in\Sigma.$
\item $\phi_D(u_{\tau})=-a_{\tau}.$
\item For each cone $\sigma\in\Sigma$ there is a $m_{\sigma}\in M$ such that $\phi_D (u)=\langle m_{\sigma},u\rangle$ for all $u\in \sigma$
and $\langle m_{\sigma},u_{\tau}\rangle=-a_{\tau}$ for all $\tau\in\sigma(1).$
\end{enumerate}
\begin{proposition}\label{bas}{\rm (\cite[Theorem 6.1.10 and Theorem 6.2.12]{cls})} Let $D=\sum_{\tau\in \Sigma(1)}a_{\tau}D_{\tau}$ be a  Cartier divisor  on a complete toric variety $X_{\Sigma}$ and denote $$P_D=\{m\in M_{\R}|\langle m,u_{\tau}\rangle\geq -a_{\tau},\forall \tau\in \Sigma(1)\}.$$
 Then the following are equivalent:
\begin{enumerate}
\item $D$ is basepoint free.
\item $D$ is a nef divisor.
\item $\phi_D$ is a upper convex function.
\item \label{ddd} $m_{\sigma}\in P_D$ for all $\sigma\in\Sigma(d).$
\item $\phi_D(u)={\rm min}_{m\in P_D}\langle m,u\rangle$ for all $u\in N_{\R}.$
\end{enumerate}
 \end{proposition}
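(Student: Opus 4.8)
The plan is to exploit the standard description of the global sections of $\ko_{X_\Sigma}(D)$ as the lattice points of $P_D$, namely $H^0(X_\Sigma,\ko_{X_\Sigma}(D))=\bigoplus_{m\in P_D\cap M}\C\cdot\chi^m$, together with the orbit-cone correspondence, and to organize the five conditions into two groups: the purely combinatorial equivalences among (3), (4), (5), and the geometric statements (1), (2). I would prove the cycle $(4)\Rightarrow(5)\Rightarrow(3)\Rightarrow(4)$, then $(1)\Leftrightarrow(4)$, and finally $(1)\Rightarrow(2)\Rightarrow(3)$, which places all five conditions in a single equivalence class.

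For the combinatorial core, the key elementary observation is that for every $m\in P_D$ and every $u\in N_\R$ one has $\langle m,u\rangle\ge\phi_D(u)$: writing $u=\sum_{\tau\in\sigma(1)}c_\tau u_\tau$ with $c_\tau\ge 0$ for a cone $\sigma\in\Sigma$ containing $u$, and using that $\phi_D$ is linear on $\sigma$ with $\phi_D(u_\tau)=-a_\tau$, the defining inequality $\langle m,u_\tau\rangle\ge -a_\tau$ of $P_D$ propagates to $u$. Hence $\phi_D(u)\le\min_{m\in P_D}\langle m,u\rangle$ in all cases. If (4) holds, then for $u\in\sigma\in\Sigma(d)$ we have $\phi_D(u)=\langle m_\sigma,u\rangle\ge\min_{m\in P_D}\langle m,u\rangle$, and combined with the reverse inequality this gives (5). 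Since a pointwise minimum of linear functions is upper convex, (5) implies (3). Conversely, if $\phi_D$ is upper convex, then each linear piece $\langle m_\sigma,\cdot\rangle$ with $\sigma\in\Sigma(d)$, which agrees with $\phi_D$ on the full dimensional cone $\sigma$, must dominate $\phi_D$ everywhere by a supporting hyperplane argument; evaluating at $u_\tau$ yields $\langle m_\sigma,u_\tau\rangle\ge\phi_D(u_\tau)=-a_\tau$ for every ray $\tau$, that is $m_\sigma\in P_D$, which is (4).

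For $(1)\Leftrightarrow(4)$ I would use that the base locus of $\ko_{X_\Sigma}(D)$ is a $T_N$-invariant closed subset; if nonempty it contains a torus-fixed point $x_\sigma$ for some $\sigma\in\Sigma(d)$. On the affine chart $U_\sigma$ the divisor $D$ is the principal divisor of $\chi^{-m_\sigma}$, so $\chi^{m_\sigma}$ is a local generator and, since $\sigma$ is full dimensional, is the only character nonvanishing at $x_\sigma$; it extends to a global section precisely when $m_\sigma\in P_D$. Thus $\ko_{X_\Sigma}(D)$ is generated at every fixed point, equivalently everywhere, iff $m_\sigma\in P_D$ for all $\sigma\in\Sigma(d)$, giving $(1)\Leftrightarrow(4)$. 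The implication $(1)\Rightarrow(2)$ is general: a basepoint free divisor realizes $\ko_{X_\Sigma}(D)$ as the pullback of $\ko_{\Pj^n}(1)$ under the induced morphism $X_\Sigma\rw\Pj^n$, and the pullback of a nef line bundle is nef.

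The main obstacle is the remaining implication $(2)\Rightarrow(3)$, that nefness forces upper convexity, since this is where the completeness of $\Sigma$ is genuinely used. Here I would invoke that the Mori cone of a complete toric variety is generated by the classes of the torus-invariant curves $C_\omega$ indexed by the walls $\omega\in\Sigma(d-1)$, so that $D$ is nef iff $D\cdot C_\omega\ge 0$ for every wall. For a wall $\omega=\sigma\cap\sigma'$ separating two maximal cones, the intersection number $D\cdot C_\omega$ is computed from the wall relation and measures the failure of $\phi_D$ to be concave across $\omega$; its nonnegativity is exactly local upper convexity along $\omega$. Finally, a piecewise linear function on a complete fan that is locally upper convex across every wall is globally upper convex, yielding (3). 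Verifying the intersection number formula across a wall and the passage from local to global convexity is the technical heart of the argument; everything else is formal.
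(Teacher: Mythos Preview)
The paper does not supply its own proof of this proposition: it is stated with a citation to \cite[Theorem 6.1.10 and Theorem 6.2.12]{cls} and used as a black box. So there is nothing in the paper to compare your argument against.

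That said, your proposal is correct and is essentially the standard argument one finds in the cited reference. The cycle $(4)\Rightarrow(5)\Rightarrow(3)\Rightarrow(4)$ via the inequality $\langle m,u\rangle\ge\phi_D(u)$ for $m\in P_D$, the identification $(1)\Leftrightarrow(4)$ through the torus-fixed points and the local generator $\chi^{m_\sigma}$, the general implication $(1)\Rightarrow(2)$ via pullback of $\ko_{\Pj^n}(1)$, and the step $(2)\Rightarrow(3)$ via wall relations and the toric cone theorem are exactly how Cox--Little--Schenck organize the proof. One small caveat: in your $(4)\Rightarrow(5)$ step, the claim that $\phi_D(u)\le\min_{m\in P_D}\langle m,u\rangle$ holds ``in all cases'' requires $P_D$ to be nonempty (otherwise the minimum is vacuous); but once (4) is assumed this is automatic since each $m_\sigma\in P_D$, so the argument goes through as written.
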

 The support function
$\phi_D$ of a Cartier divisor $D$ on a complete toric variety $X_{\Sigma}$ is called {\it strictly convex} if it is upper convex and for each $\sigma\in \Sigma(d)$ satisfies
$$\langle m_{\sigma},u\rangle=\phi_D(u)\Longleftrightarrow u\in\sigma.$$
\begin{proposition}\label{str} {\rm (\cite[Theorem 6.1.15 and Corollary 6.1.16]{cls})}A Cartier divisor $D$ on a complete toric variety $X_{\Sigma}$ is ample if and only if its support function $\phi_D$ is strictly convex. If $D$ is ample then $P_D$ is a full dimensional lattice polytope whose normal fan is $\Sigma.$
\end{proposition}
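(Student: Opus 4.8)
The plan is to route everything through the polytope $P_D$ and its normal fan, using Proposition \ref{bas} as the bridge for the ``nef/upper convex'' half of the statement. I begin by analyzing $P_D$ under the hypothesis that $\phi_D$ is strictly convex. Strict convexity entails upper convexity, so Proposition \ref{bas} gives $\phi_D(u)=\min_{m\in P_D}\langle m,u\rangle$ together with $m_\sigma\in P_D$ for every $\sigma\in\Sigma(d)$; and since $D$ is Cartier each $m_\sigma$ lies in $M$. The normal cone of $P_D$ at the point $m_\sigma$ is $\{u:\langle m_\sigma,u\rangle=\min_{m\in P_D}\langle m,u\rangle\}=\{u:\langle m_\sigma,u\rangle=\phi_D(u)\}$, and the defining property of strict convexity says this set equals $\sigma$. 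Hence each $m_\sigma$ is a genuine vertex, the $m_\sigma$ are pairwise distinct (their normal cones are the distinct maximal cones), $P_D$ is full-dimensional, and the collection of normal cones is exactly $\Sigma$. This already proves that $P_D$ is a full-dimensional lattice polytope whose normal fan is $\Sigma$, which is the second assertion of the proposition once the equivalence with ampleness is in hand.

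For the forward implication (strictly convex $\Rightarrow$ ample) I would invoke the foundational dictionary between full-dimensional lattice polytopes and projective toric varieties: the polytope $P_D$ determines a projective toric variety $X_{P_D}$ carrying an ample divisor whose support function is $\phi_D$ and whose defining fan is the normal fan of $P_D$. By the previous paragraph that normal fan is $\Sigma$, so $X_{P_D}$ and $X_\Sigma$ are the toric varieties of one and the same fan, hence canonically isomorphic, and under this isomorphism $D$ is identified with the ample polytope divisor. Therefore $D$ is ample.

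For the reverse implication (ample $\Rightarrow$ strictly convex), ampleness gives nefness, so $\phi_D$ is upper convex by Proposition \ref{bas}. Suppose $\phi_D$ were not strictly convex. Then for some $\sigma\in\Sigma(d)$ there is $u\notin\sigma$ with $\langle m_\sigma,u\rangle=\phi_D(u)$, and since the locus where $m_\sigma$ agrees with $\phi_D$ is a union of cones strictly larger than $\sigma$, it contains an adjacent maximal cone $\sigma'$ sharing a wall $\tau=\sigma\cap\sigma'\in\Sigma(d-1)$; as $\sigma'$ is full-dimensional this forces $m_{\sigma'}=m_\sigma$. Passing to a very ample multiple $kD$, whose support function is $k\phi_D$, the two maximal cones still share the vector $km_\sigma=km_{\sigma'}$, so in the induced closed embedding $X_\Sigma\hookrightarrow\Pj^s$ the torus-invariant curve $C_\tau$ maps into the segment joining the vertices indexed by $\sigma$ and $\sigma'$, which have coincided, and is therefore collapsed to a point. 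A closed embedding cannot contract a curve, a contradiction. Hence $\phi_D$ is strictly convex, and combining with the first paragraph completes the polytope statement as well.

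The main obstacle I anticipate is the forward direction's reliance on the polytope-to-projective-toric-variety correspondence, and in particular the verification that the resulting $X_{P_D}$ is \emph{genuinely} $X_\Sigma$ with $D$ matching the ample polytope divisor — making this identification precise at the level of fans and divisors, rather than merely on the dense torus, is the technical heart. The reverse direction's only nontrivial input is the standard fact that an ample divisor induces, on a large multiple, a closed embedding that cannot contract the torus-invariant curve $C_\tau$, together with the purely combinatorial observation that failure of strict convexity is exactly the coincidence $m_\sigma=m_{\sigma'}$ across some wall $\tau$.
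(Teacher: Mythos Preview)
The paper does not supply a proof of this proposition at all: it is quoted as \cite[Theorem 6.1.15 and Corollary 6.1.16]{cls} and used as a black box, so there is no argument on the paper's side to compare against. Your reconstruction is essentially the standard proof from that reference---identifying the normal cone of $P_D$ at $m_\sigma$ with $\sigma$ via strict convexity to recover $\Sigma$ as the normal fan, invoking the polytope--projective-toric-variety dictionary for the forward implication, and for the reverse implication reducing failure of strict convexity to a wall $\tau=\sigma\cap\sigma'$ with $m_\sigma=m_{\sigma'}$, which forces $D\cdot C_\tau=0$ and contradicts ampleness. The only places that would benefit from tightening are (i) the full-dimensionality of $P_D$, which you assert but do not argue (it follows because any $u$ in the lineality space of $P_D$ would lie in every maximal cone $\sigma$, contradicting strong convexity of the cones), and (ii) the contraction of $C_\tau$ under the embedding, which is cleaner if phrased directly as the intersection-number computation $D\cdot C_\tau=0$ rather than via the image of the curve in $\Pj^s$.
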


\section{Complete toric variety with reductive automorphism group}
The automorphism group $\Aut (X_{\Sigma})$ of  a nonsingular complete toric variety $X_{\Sigma}$ was firstly studied by Demazure in \cite[Section 4]{de}.
Identifying the elements of the Lie algebra of  $\Aut (X_{\Sigma})$ with the invariant differential operators on the coordinate ring of $X_{\Sigma},$
Demazure gave a very simple description of the structure of the Lie algebra of $\Aut (X_{\Sigma})$ using the ¡¤¡¤Demazure root system¡° named after him.
The Demazure root system $\kr$  of $\Aut (X_{\Sigma})$ has a very simple description:
$$\kr=\{m\in M|\exists \tau\in \Sigma(1) : \langle u_{\tau},m\rangle=-1,\langle u_{\tau'},m\rangle\geq 0,\forall\tau'\in\Sigma(1)\backslash\{\tau\}\}.$$
Note here we use notation of \cite{ni1,ni2}, which are different form those in \cite{de} by a ¡±minus¡° signature.
The Demazure roots in $\kr\cap-\kr=\{m\in\kr|-m\in\kr\}$ are called semisimple roots.
 $\Aut (X_{\Sigma})$ is a reductive algebraic group iff all Demazure roots in $\kr$ are semisimple, i.e., $\kr=-\kr:=\{-m|m\in\kr\}.$ The following proposition of Nill, Benjamin's will be used in proving our main theorem in the next section.
 \begin{proposition}\cite[Proposition 3.18]{ni2}\label{red} A $d$-dimensional complete toric variety is isomorphic to a product of projective spaces iff there are $d$-linearly independent semisimple roots.
 \end{proposition}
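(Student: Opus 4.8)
The plan is to first record a concrete description of semisimple roots and then treat the two implications separately; the reverse one is where the work lies.

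\textbf{A concrete criterion.} I would begin by unwinding the definition of $\kr$. If $m\in M$ is a semisimple root then $m\in\kr$ supplies a ray $\tau_-$ with $\langle u_{\tau_-},m\rangle=-1$ and $\langle u_\tau,m\rangle\ge0$ for $\tau\neq\tau_-$, while $-m\in\kr$ supplies a ray $\tau_+$ with $\langle u_{\tau_+},m\rangle=+1$ and $\langle u_\tau,m\rangle\le0$ for $\tau\neq\tau_+$; comparing the two sets of inequalities forces $\tau_+\neq\tau_-$, forces $\langle u_\tau,m\rangle=0$ for every other ray, and shows that $\tau_\pm$ are the only rays with value $\pm1$. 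Conversely any such $m$ lies in $\kr\cap(-\kr)$. Thus: \emph{$m\in M$ is a semisimple root iff the integers $\langle u_\tau,m\rangle$ $(\tau\in\Sigma(1))$ consist of a single $+1$, a single $-1$, and zeros.}

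\textbf{The forward implication.} Suppose $X\cong\Pj^{n_1}\times\cdots\times\Pj^{n_k}$ with $n_1+\cdots+n_k=d$. Its fan is the product of the standard fans of the factors, so $N=N_1\oplus\cdots\oplus N_k$ and, fixing a basis $f^{(j)}_1,\dots,f^{(j)}_{n_j}$ of each $N_j$, the rays of $\Sigma$ are the $f^{(j)}_l$ $(1\le l\le n_j)$ together with $f^{(j)}_0:=-(f^{(j)}_1+\cdots+f^{(j)}_{n_j})$. Taking $g^{(j)}_l\in M$ dual to $f^{(j)}_l$ and extended by $0$ on the other summands, one has $\langle u_\tau,g^{(j)}_l\rangle=+1$ for $u_\tau=f^{(j)}_l$, $\langle u_\tau,g^{(j)}_l\rangle=-1$ for $u_\tau=f^{(j)}_0$, and $\langle u_\tau,g^{(j)}_l\rangle=0$ for every other ray; by the criterion each $g^{(j)}_l$ is a semisimple root, and since the $g^{(j)}_l$ form a $\Z$-basis of $M$ we obtain $d$ linearly independent semisimple roots.

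\textbf{The reverse implication (the hard part).} Now let $R\subset M$ be the set of all semisimple roots, and assume it spans $M_\R$. For $\al\in R$ write $u^+_\al,u^-_\al$ for its two distinguished rays (with $\al$-values $+1,-1$) and put $\al^\vee:=u^+_\al-u^-_\al\in N$; then $\langle\al^\vee,\al\rangle=2$, and the reflection $s_\al\colon n\mapsto n-\langle n,\al\rangle\,\al^\vee$ of $N$ transposes $u^+_\al\leftrightarrow u^-_\al$ and fixes every other ray. The crucial use of the hypothesis is through Demazure's description of $\Aut(X_\Sigma)$ (\cite{de}; cf.\ \cite{cls}): for a semisimple root the root subgroups of $\al$ and $-\al$, together with $T_N$, generate a subgroup isogenous to $SL_2$ whose Weyl element normalizes $T_N$ and is therefore induced by an automorphism of $X_\Sigma$; that automorphism acts on $N$ as $s_\al$, hence \emph{preserves the fan $\Sigma$}. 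Consequently $W:=\langle s_\al:\al\in R\rangle$ is a finite reflection group acting on $N_\R\cong\R^d$ (finite because it permutes the finitely many rays, which span) and preserving $\Sigma$, and $R$ — being $W$-stable, spanning, and crystallographic (as $\al^\vee\in N$) — is a reduced root system of rank $d$. Decomposing $R$ into its irreducible components $R_1,\dots,R_k$ gives an orthogonal, $W$-stable decomposition $N_\R=V_1\oplus\cdots\oplus V_k$ with no zero summand (since the rank is $d$) and $W=W_1\times\cdots\times W_k$; moreover, by the criterion, every ray of $\Sigma$ lies in exactly one $V_j$ (a ray not orthogonal to some $\al\in R_j$ must be one of that root's two distinguished rays, which lie in $V_j$, and cannot also be distinguished for a root of another component). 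It then remains to prove the rigidity statement: \emph{a complete fan in $V_j\cong\R^{n_j}$ invariant under the irreducible reflection group $W_j$, each of whose reflections interchanges exactly two of the rays, is the fan of $\Pj^{n_j}$} — i.e.\ $W_j\cong S_{n_j+1}$, the rays in $V_j$ form a single $W_j$-orbit $\{v_0,\dots,v_{n_j}\}$ with $v_0+\cdots+v_{n_j}=0$ (any $n_j$ of them a basis of the lattice $V_j\cap N$), and the maximal cones are precisely the $\mathrm{Cone}(\{v_l:l\neq i\})$. Granting this, a maximal cone $\sigma$ of $\Sigma$ — generated by rays and strongly convex — meets each $\{v_0,\dots,v_{n_j}\}$ in at most $n_j$ rays, hence, having dimension $d$, in exactly $n_j$ rays, which then span $V_j\cap N$; so $\sigma$ is the product of a maximal cone from each factor, which forces $\Sigma=\Sigma_1\times\cdots\times\Sigma_k$ and $X\cong\Pj^{n_1}\times\cdots\times\Pj^{n_k}$.

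\textbf{Where the difficulty sits.} Everything hard is in the reverse implication, and there in two inputs: (i) Demazure's structure theorem, which makes each reflection $s_\al$ of a semisimple root an automorphism of $X_\Sigma$ and hence of $\Sigma$ — this is exactly what upgrades ``$d$ independent semisimple roots'' to ``a full-rank finite reflection group acting on the fan''; and (ii) the combinatorial rigidity that an irreducible, full-rank such action on a complete fan can only be the $\Pj^n$ one. I would spend the effort on (ii): writing each ray in coordinates dual to a basis of semisimple roots gives a $\{0,\pm1\}$-matrix with exactly one $+1$ and one $-1$ in each row, and completeness of the fan (the rays positively span the space) together with the $W$-symmetry should force this matrix to be block-diagonal with blocks of $\Pj^n$ type.
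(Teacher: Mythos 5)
The paper itself offers no proof of this proposition --- it is quoted verbatim from Nill \cite[Proposition 3.18]{ni2} --- so your attempt has to be judged on its own. Your characterization of semisimple roots (values on the rays consist of a single $+1$, a single $-1$, and zeros), the forward implication, and the construction of the reduced crystallographic root system $R$ with coroots $\al^\vee=u^+_\al-u^-_\al$ are all correct. But the argument as written has a genuine gap at exactly the point you flag: the ``rigidity statement'' --- that the rays lying in $V_j$ form a $\Pj^{n_j}$-configuration and that the cones must be the product cones --- is the entire mathematical content of the hard direction, and you assert it (``It then remains to prove\dots Granting this\dots I would spend the effort on (ii)'') rather than prove it. There is also a local circularity in the step ``every ray lies in exactly one $V_j$'': the parenthetical ``which lie in $V_j$'' is what you are trying to establish. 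The fact you actually need is that a ray cannot be a distinguished ray for roots $\al\in R_j$ and $\be\in R_i$ with $i\neq j$; this is true but requires the small computation $0=\langle\al^\vee,\be\rangle=\langle u^+_\al,\be\rangle-\langle u^-_\al,\be\rangle$ combined with the uniqueness of the ray on which $\be$ takes the value $\pm1$.

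The gap is closable, and in a way that makes your input (i) unnecessary: $s_\al$ permutes the primitive ray generators by pure linear algebra (it swaps $u^+_\al\leftrightarrow u^-_\al$ and fixes the rays orthogonal to $\al$), which already gives $s_\al^*(R)=R$; Demazure's theorem that $s_\al$ preserves the \emph{fan} is never needed. For the rigidity: let $S_j$ be the rays in $V_j$. Irreducibility of $R_j$ forces the graph on $S_j$ with edges $\{u^+_\al,u^-_\al\}$, $\al\in R_j$, to be connected (roots whose edges lie in different components would be mutually orthogonal), so the image of $W_j$ in the symmetric group of $S_j$ is the full symmetric group; conjugating one root by $W_j$ then shows every pair of elements of $S_j$ is the distinguished pair of some root, whence $R_j$ is of type $A_{|S_j|-1}$ and $|S_j|=n_j+1$. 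Summing the values of any root over $S_j$ gives $\sum_{u\in S_j}u=0$ (as $R$ spans $M_{\R}$); integrality of the roots of $R_j$ on $N$ shows any $n_j$ elements of $S_j$ form a $\Z$-basis of $N\cap V_j$ and that $N=\bigoplus_j(N\cap V_j)$. Finally, strong convexity prevents a cone from containing all of $S_j$, so every $d$-dimensional cone of $\Sigma$ is a maximal cone of the product of the $\Pj^{n_j}$-fans, and completeness forces $\Sigma$ to be that entire product fan. Until these steps are written out, the proposal is a correct plan, not a proof.
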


 \section{Toric Fano manifolds with nef tangent bundles}
 The projective space $\Pj^n$ is a toric Fano manifold and the following exact sequence
$$\xymatrix{
0\ar[r]&\ko_{\Pj^d}\ar[r]& \ko_{\Pj^n}(1)^{d+1}\ar[r] & T_{\Pj^d}\ar[r] & 0
}\label{pj}$$
is  called the Euler sequence of $\Pj^d.$ The following theorem is a toric generalization of this result.

\begin{theorem}\label{tan}{\rm (\cite[Theorem 8.1.6]{cls})} Let $X_{\Sigma}$ be a  toric  manifold associated with the complete fan $\Sigma,$ then we have the following generalized Euler sequence
$$\xymatrix{
0\ar[r]&\ko^{\oplus \rho}_{X_{\Sigma}}\ar[r]& \oplus_{\tau\in\Sigma(1)}\ko_{X_{\Sigma}}(D_{\tau})\ar[r] & T_{X_{\Sigma}}\ar[r] & 0,
}$$
where $\rho$ is the Picard number of $X_{\Sigma}.$
\end{theorem}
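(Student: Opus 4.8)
The plan is to realize $X_{\Sigma}$ as a torus quotient of an open subset of affine space (the Cox homogeneous coordinate construction) and to obtain the asserted sequence by descending the relative tangent sequence of that quotient. Set $n=|\Sigma(1)|$ and let $S=\C[x_{\tau}:\tau\in\Sigma(1)]$ be the Cox ring, graded by the divisor class group via $\deg x_{\tau}=[D_{\tau}]$. The starting point is the exact sequence of lattices
$$0\to M\xrightarrow{\ m\mapsto(\langle m,u_{\tau}\rangle)_{\tau}\ }\Z^{\Sigma(1)}\to \mathrm{Cl}(X_{\Sigma})\to 0,$$
in which, since $X_{\Sigma}$ is a manifold, $\mathrm{Cl}(X_{\Sigma})=\mathrm{Pic}(X_{\Sigma})$ is free of rank $\rho=n-d$. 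Applying $\Hom_{\Z}(-,\C^{*})$ yields an exact sequence of tori $1\to G\to(\C^{*})^{\Sigma(1)}\to T_{N}\to 1$, where $G=\Hom_{\Z}(\mathrm{Cl}(X_{\Sigma}),\C^{*})$ is a torus of dimension $\rho$ acting on $\C^{\Sigma(1)}$ by the coordinatewise action of $(\C^{*})^{\Sigma(1)}$. Because $\Sigma$ is smooth (and not merely simplicial), $G$ acts freely on $U:=\C^{\Sigma(1)}\setminus Z$, where $Z$ is the irrelevant locus, and $\pi\colon U\to X_{\Sigma}=U/G$ is a principal $G$-bundle; as $G$ is a torus, hence a special group, $\pi$ is Zariski-locally trivial.

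First I would write down the relative tangent sequence of this principal bundle. On $U$ the tangent bundle is free, $T_{U}=\bigoplus_{\tau\in\Sigma(1)}\ko_{U}\,\partial_{x_{\tau}}$, and $d\pi\colon T_{U}\to\pi^{*}T_{X_{\Sigma}}$ is surjective with kernel the vertical tangent bundle; the infinitesimal $G$-action trivializes the latter as $\mathrm{Lie}(G)\otimes_{\C}\ko_{U}$. This gives a $G$-equivariant short exact sequence on $U$,
$$0\to \mathrm{Lie}(G)\otimes_{\C}\ko_{U}\to T_{U}\xrightarrow{\ d\pi\ }\pi^{*}T_{X_{\Sigma}}\to 0,\qquad(\star)$$
all of whose terms carry canonical $G$-linearizations and whose maps are $G$-equivariant.

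The second step is to descend $(\star)$ along $\pi$. For a principal $G$-bundle the assignment "take $G$-invariant direct image" is an exact equivalence between $G$-equivariant quasi-coherent sheaves on $U$ and quasi-coherent sheaves on $X_{\Sigma}$ (with inverse $\pi^{*}$ and its canonical linearization), so it sends $(\star)$ to a short exact sequence on $X_{\Sigma}$; it remains to identify the three descended terms. The right-hand term descends to $T_{X_{\Sigma}}$ by construction. Since $G$ is abelian it acts trivially on its own Lie algebra, so $\mathrm{Lie}(G)\otimes_{\C}\ko_{U}$ is equivariantly trivial of rank $\rho$ and descends to $\ko_{X_{\Sigma}}^{\oplus\rho}$. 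For the middle term one computes $G$-weights: $x_{\tau}$ has $(\C^{*})^{\Sigma(1)}$-weight $e_{\tau}$, whose restriction to $G$ is the image $[D_{\tau}]\in\mathrm{Cl}(X_{\Sigma})$ of $e_{\tau}$, so the dual field $\partial_{x_{\tau}}$ has $G$-weight $-[D_{\tau}]$. Under the Cox dictionary—in which a $G$-linearized line $\ko_{U}\otimes\C_{\chi}$ descends to the invertible sheaf of class $-\chi$, because $\pi^{*}\ko_{X_{\Sigma}}(D)$ is equivariantly $\ko_{U}\otimes\C_{-[D]}$, matching $H^{0}(X_{\Sigma},\ko_{X_{\Sigma}}(D))\cong S_{[D]}$—the line $\ko_{U}\,\partial_{x_{\tau}}$ descends to $\ko_{X_{\Sigma}}(D_{\tau})$. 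Summing over $\tau$ identifies the middle term with $\bigoplus_{\tau}\ko_{X_{\Sigma}}(D_{\tau})$, and substituting the three identifications into the descent of $(\star)$ produces exactly the generalized Euler sequence.

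The main obstacle is precisely this weight bookkeeping: one must track the $G$-linearization of each summand $\ko_{U}\,\partial_{x_{\tau}}$ through the sign convention of the Cox correspondence so as to land on $\ko_{X_{\Sigma}}(+D_{\tau})$ rather than $\ko_{X_{\Sigma}}(-D_{\tau})$, and simultaneously verify that the induced maps are the expected ones—the left map being the infinitesimal action and the right map the "Euler" surjection dual to $e_{\tau}\mapsto[D_{\tau}]$. Everything else in the argument is formal once the free action is established: the principal-bundle structure of $\pi$, the relative tangent sequence $(\star)$, and the exactness of descent all follow from standard properties of geometric quotients by a torus.
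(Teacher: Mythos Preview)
The paper does not supply its own proof of this theorem: it is stated with the citation \cite[Theorem 8.1.6]{cls} and used as input, so there is nothing in the paper to compare your argument against line by line.

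That said, your proposal is correct and is essentially the argument behind the cited result. Cox--Little--Schenck state the dual (cotangent) version
\[
0 \longrightarrow \Omega^1_{X_\Sigma} \longrightarrow \bigoplus_{\tau\in\Sigma(1)}\ko_{X_\Sigma}(-D_\tau) \longrightarrow \mathrm{Cl}(X_\Sigma)\otimes_{\Z}\ko_{X_\Sigma} \longrightarrow 0
\]
and obtain the tangent sequence by dualizing in the smooth case; their derivation is phrased in terms of the Cox ring grading rather than explicit equivariant descent, but the content---identify $\partial_{x_\tau}$ as having $G$-weight $-[D_\tau]$ and read off the summands---is the same as yours. Your packaging via the relative tangent sequence of the principal $G$-bundle $U\to X_\Sigma$ and invariant pushforward is a clean geometric reformulation of that computation. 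One small caveat: you assert that $\mathrm{Cl}(X_\Sigma)=\mathrm{Pic}(X_\Sigma)$ is free of rank $\rho$, which is true for a smooth complete toric variety but deserves a word of justification (it is exactly what makes $G$ a connected torus rather than a diagonalizable group with several components); once that is noted, the rest of your descent argument goes through without change.
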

In particular, the canonical divisor and anticanonical divisor of the  toric  manifold $X_{\Sigma}$ are respectively given by
 $$K=-\sum _{\tau\in\Sigma(1)}D_{\tau};\quad\quad K^*=\sum _{\tau\in\Sigma(1)}D_{\tau}.$$
 \begin{proposition}
 Assume $X_{\Sigma}$ is a  toric manifold with nef tangent bundle, then for any $\tau\in \Sigma(1),$ the associated $T_N$-invariant Cartier divisor $D_{\tau}$ is a nef divisor.
\end{proposition}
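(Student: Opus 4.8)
The plan is to obtain the statement as a direct consequence of the generalized Euler sequence of Theorem~\ref{tan}, combined with two standard closure properties of nef vector bundles on a projective variety: a quotient of a nef bundle is nef, and an extension of a nef bundle by a nef bundle is again nef. Note that the Fano hypothesis is not needed here; only the completeness and smoothness of $X_{\Sigma}$ enter, through Theorem~\ref{tan}.

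First I would apply Theorem~\ref{tan} to get the short exact sequence
$$0\rw \ko^{\oplus\rho}_{X_{\Sigma}}\rw \bigoplus_{\tau\in\Sigma(1)}\ko_{X_{\Sigma}}(D_{\tau})\rw T_{X_{\Sigma}}\rw 0.$$
The sub-bundle $\ko^{\oplus\rho}_{X_{\Sigma}}$ is trivial, hence nef, and $T_{X_{\Sigma}}$ is nef by hypothesis, so the middle term $\bigoplus_{\tau\in\Sigma(1)}\ko_{X_{\Sigma}}(D_{\tau})$ is an extension of a nef bundle by a nef bundle, and is therefore nef. For a fixed ray $\tau_0\in\Sigma(1)$, the line bundle $\ko_{X_{\Sigma}}(D_{\tau_0})$ is a direct summand of $\bigoplus_{\tau\in\Sigma(1)}\ko_{X_{\Sigma}}(D_{\tau})$, hence a quotient of it, and a quotient of a nef bundle is nef; thus $\ko_{X_{\Sigma}}(D_{\tau_0})$ is a nef line bundle, i.e. $D_{\tau_0}$ is a nef divisor. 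Since $\tau_0$ is arbitrary, this finishes the proof.

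The one ingredient that is not purely formal is the stability of nefness under extensions, and this is the step I would justify most carefully, although it is classical. If one prefers an essentially self-contained argument, one can instead verify the nef condition curve by curve: for an irreducible curve $C\subset X_{\Sigma}$ with normalization $\nu\colon\widetilde C\rw C$, pull the Euler sequence back along $\nu$ and use the elementary inequality that, on a smooth projective curve, the minimal Harder--Narasimhan slope of the central term of a short exact sequence is at least the minimum of those of the sub- and quotient bundles. Since $\nu^{*}\ko^{\oplus\rho}_{X_{\Sigma}}$ is trivial and $\nu^{*}T_{X_{\Sigma}}$ is nef, this forces $\bigoplus_{\tau}\nu^{*}\ko_{X_{\Sigma}}(D_{\tau})$, and hence each summand $\nu^{*}\ko_{X_{\Sigma}}(D_{\tau})$, to have nonnegative degree, i.e. $D_{\tau}\cdot C\geq 0$. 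In either form the argument is short, and I do not anticipate any real difficulty.
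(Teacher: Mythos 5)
Your argument is correct, but it takes a different route from the paper. The paper fixes $\tau$, restricts to the invariant hypersurface $D_{\tau}$, and uses the normal bundle sequence
$0\rw T_{D_{\tau}}\rw T_{X_{\Sigma}}\big|_{D_{\tau}}\rw N_{D_{\tau}}\rw 0$
together with the identification $N_{D_{\tau}}\cong \ko_{X_{\Sigma}}(D_{\tau})\big|_{D_{\tau}}$ and the fact that a quotient of a nef bundle is nef (\cite[Proposition 1.15]{dps}); this shows $\ko_{X_{\Sigma}}(D_{\tau})$ is nef along curves lying in $D_{\tau}$, and nefness on the remaining curves is the automatic inequality $D_{\tau}\cdot C\geq 0$ for $C\not\subset\mathrm{Supp}\,D_{\tau}$ with $D_{\tau}$ effective (a point the paper leaves implicit). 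You instead work globally with the generalized Euler sequence of Theorem~\ref{tan}, invoking both the extension property and the quotient property of nefness; both are in \cite[Proposition 1.15]{dps}, so nothing is missing, and your curve-by-curve variant via minimal slopes on the normalization is a correct elementary substitute for the extension step. Your route buys a slightly stronger conclusion --- the full direct sum $\oplus_{\tau}\ko_{X_{\Sigma}}(D_{\tau})$ is nef, not just each summand --- and it sidesteps the case division between curves inside and outside $D_{\tau}$; the paper's route uses only the (easier) quotient half of \cite[Proposition 1.15]{dps} and does not need the extension property. Your observation that the Fano hypothesis is not used is consistent with the statement, which assumes only a (complete, smooth) toric manifold.
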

\begin{proof} Since $D_{\tau}$ is $T_N$-invariant, it is a smooth hypersurface locating inside  $X_{\Sigma},$  we have the following exact sequnce
$$\xymatrix{
0\ar[r]& T_{D_{\tau}}\ar[r]& T_{X_{\Sigma}}\Big|_{D_{\tau}}\ar[r] & N_{D_{\tau}}\ar[r] & 0,
}$$
Note the normal sheaf  of $D_{\tau}$ in $X_{\Sigma}$ could be identified with $\ko_{X_{\Sigma}}(D_{\tau}).$ Since the tangent bundle $T_{X_{\Sigma}}$ is a nef vector bundle, so is the quotient bundle $N_{D_{\tau}}$ by \cite[Proposition 1.15]{dps}. Hence $D_{\tau}$ is a nef divisor.
\end{proof}
\begin{theorem}\label{gri} The tangent bundle of a  toric manifold with nef tangent bundle is Griffiths semipositive.
\end{theorem}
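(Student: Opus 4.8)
The plan is to build an explicit Hermitian metric on $T_{X_{\Sigma}}$ by transporting a metric through the generalized Euler sequence of Theorem~\ref{tan}. First I would invoke the proposition just proved: each $T_N$-invariant Cartier divisor $D_{\tau}$, $\tau\in\Sigma(1)$, is nef. Because $X_{\Sigma}$ is a complete toric manifold, Proposition~\ref{bas} upgrades ``nef'' to ``basepoint free'', so $\ko_{X_{\Sigma}}(D_{\tau})$ is globally generated. Its global sections thus define a morphism to a projective space, and pulling back the Fubini--Study metric equips $\ko_{X_{\Sigma}}(D_{\tau})$ with a smooth Hermitian metric whose Chern curvature form is semipositive; for a line bundle this is precisely Griffiths semipositivity.

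Next I would put on $E:=\bigoplus_{\tau\in\Sigma(1)}\ko_{X_{\Sigma}}(D_{\tau})$ the orthogonal direct sum of these metrics. A direct sum of Griffiths semipositive bundles is Griffiths semipositive (the curvature of the direct sum metric is block diagonal), so $E$ is Griffiths semipositive. The generalized Euler sequence exhibits $T_{X_{\Sigma}}$ as the holomorphic quotient of $E$ by the trivial subbundle $\ko_{X_{\Sigma}}^{\oplus\rho}$; I would endow $T_{X_{\Sigma}}$ with the induced quotient metric. By the classical curvature formula for a quotient in a short exact sequence of Hermitian bundles, the Chern curvature of $T_{X_{\Sigma}}$ equals the restriction of that of $E$ plus the nonnegative contribution of the second fundamental form, hence is again Griffiths semipositive. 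This proves the theorem.

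The only step requiring genuine care is the last one: one must verify that the second fundamental form term enters the quotient curvature with the correct sign, so that Griffiths semipositivity is preserved under passing to quotients --- a classical fact, in the same spirit as \cite[Proposition 1.15]{dps} used above for nefness, which should be stated and referenced precisely. Every other ingredient is routine, and I expect no real obstacle: the substantive point is simply that toricity turns ``nef'' into ``globally generated'', which is exactly what makes the Fubini--Study construction available and distinguishes this situation from the general (conjectural) case.
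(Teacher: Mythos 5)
Your proposal is correct and follows essentially the same route as the paper: nef implies basepoint free via Proposition~\ref{bas}, hence each $\ko_{X_{\Sigma}}(D_{\tau})$ carries a semipositive metric, the direct sum is Griffiths semipositive, and the generalized Euler sequence passes this to the quotient $T_{X_{\Sigma}}$ (the paper cites \cite[Proposition 3.5]{ya} for the quotient step, which is exactly the classical second-fundamental-form sign argument you describe).
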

\begin{proof} Since $\ko_{X_{\Sigma}}(D_{\tau})$ is a nef line bundle, $D_{\tau}$ is basepoint free by Proposition \ref{bas}. Hence $\ko_{X_{\Sigma}}(D_{\tau})$
is a semipositive line bundle. Hence the direct sum bundle $\oplus_{\tau\in\Sigma(1)}\ko_{X_{\Sigma}}(D_{\tau})$ is a Griffiths semipositve.
The Griffiths semipositivity of  tangent bundle $ T_{X_{\Sigma}}$ follows by the generalized Euler sequence via using \cite[Proposition 3.5]{ya}.
\end{proof}
Note Theorem \ref{gri} already implies that Campana-Peternell conjecture holds for  toric fano manifolds. In fact, from Theorem \ref{gri} we know  a toric fano manifold $X_{\Sigma}$ with nef tangent bundle has nonnegative holomorphic bisectional curvature and positive Ricci curvature. By Mok's theorem \cite{mo}, $X_{\Sigma}$ is biholomorphic to the product of Hermitian symmetric manifolds. In particular, $\Aut (X_{\Sigma})$ is a reductive algebraic group. In the rest part of this note we will give a more precise structure description of a  toric fano manifold with nef tangent bundle.

Let $\phi_{D_{\tau}}$ be the support function associated with $T_N$-invariant cartier divisor $D_{\tau}.$ Then on each open cone $\sigma\in\Sigma(d),$
$$\phi_{D_{\tau}}(u)=\langle m_{\sigma},u \rangle,~~~\forall u\in \sigma,$$
for some $m_{\sigma}\in M.$

\begin{proposition}\label{sm} If the cartier divisor $D_{\tau}$ is nef then $\{m_{\sigma}|\sigma\in\Sigma(d)\}$ are semisimple Demazure roots of $\Aut (X_{\Sigma}).$
\end{proposition}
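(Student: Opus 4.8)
The plan is to analyze the support function $\phi_{D_\tau}$ explicitly on top-dimensional cones and check that each $m_\sigma$ satisfies the defining condition of a semisimple Demazure root, namely that both $m_\sigma$ and $-m_\sigma$ lie in $\kr$.

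First I would fix $\tau\in\Sigma(1)$ and a maximal cone $\sigma\in\Sigma(d)$. By property (3) of the support function in Section 2, there is $m_\sigma\in M$ with $\langle m_\sigma,u_{\tau'}\rangle=-a_{\tau'}$ for all $\tau'\in\sigma(1)$, and here $a_{\tau'}=1$ if $\tau'=\tau$ and $a_{\tau'}=0$ otherwise, since $D_\tau=\sum_{\tau'}a_{\tau'}D_{\tau'}$ has coefficient $1$ at $\tau$ and $0$ elsewhere. Thus $\langle m_\sigma,u_\tau\rangle=-1$ when $\tau\in\sigma(1)$, and $\langle m_\sigma,u_{\tau'}\rangle=0$ for the other rays $\tau'$ of $\sigma$. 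Next, since $D_\tau$ is nef, Proposition \ref{bas}\eqref{ddd} gives $m_\sigma\in P_{D_\tau}$, which by definition of $P_{D_\tau}=\{m\mid\langle m,u_{\tau'}\rangle\geq -a_{\tau'}\ \forall\tau'\in\Sigma(1)\}$ means $\langle m_\sigma,u_{\tau'}\rangle\geq 0$ for every ray $\tau'\neq\tau$ in all of $\Sigma(1)$, and $\langle m_\sigma,u_\tau\rangle\geq -1$. Combining: either $\langle m_\sigma,u_\tau\rangle=-1$ (which happens exactly when $\tau\in\sigma(1)$, since otherwise $u_\tau\notin\sigma$ but nefness still forces $\geq 0$), giving immediately that $m_\sigma\in\kr$ with witness ray $\tau$; or $\langle m_\sigma,u_\tau\rangle\geq 0$ as well, in which case $\langle m_\sigma,u_{\tau'}\rangle\geq 0$ for \emph{all} $\tau'\in\Sigma(1)$, and since $\Sigma$ is complete the rays positively span $N_\R$, forcing $m_\sigma=0$, which is vacuously semisimple. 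So assume $\tau\in\sigma(1)$, the substantive case.

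It remains to show $-m_\sigma\in\kr$ too, i.e. to produce a ray $\eta\in\Sigma(1)$ with $\langle u_\eta,-m_\sigma\rangle=-1$ and $\langle u_{\tau'},-m_\sigma\rangle\geq 0$ for all other rays $\tau'$. Equivalently, I need a ray $\eta$ with $\langle m_\sigma,u_\eta\rangle=1$ and $\langle m_\sigma,u_{\tau'}\rangle\leq 0$ for all $\tau'\neq\eta$. Since we already know $\langle m_\sigma,u_{\tau'}\rangle\geq 0$ for all $\tau'\neq\tau$ and $\langle m_\sigma,u_\tau\rangle=-1<0$, this forces: $\langle m_\sigma,u_{\tau'}\rangle=0$ for every ray $\tau'\notin\{\tau,\eta\}$, and $\langle m_\sigma,u_\eta\rangle=1$. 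So the real content is that $m_\sigma$, as a linear functional, takes the value $0$ on all rays of $\Sigma$ except $u_\tau$ (value $-1$) and exactly one other ray $u_\eta$ (value $+1$). To see such $\eta$ must exist: completeness of $\Sigma$ means $\sum$ over rays of nonnegative combinations covers $N_\R$; more precisely, if $m_\sigma\neq 0$ then since $\langle m_\sigma,u_\tau\rangle=-1<0$, positivity of the rays' span means some ray $u_\eta$ has $\langle m_\sigma,u_\eta\rangle>0$ (otherwise $m_\sigma$ would be $\leq 0$ on a spanning set, hence $\leq 0$ everywhere on $N_\R$, impossible for a nonzero functional on a space spanned by those rays with at least one strict negative — careful: I need the cone generated by the $u_{\tau'}$ with $\tau'\neq\tau$ together with the facts at hand). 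I would argue this cleanly using the smoothness of $X_\Sigma$: the wall structure gives, for the wall $\sigma\cap\sigma'$ opposite to $u_\tau$ in $\sigma$, a relation $u_\tau+u_{\tau''}=\sum_{\rho\in(\sigma\cap\sigma')(1)}b_\rho u_\rho$ with integer $b_\rho$ (the wall relation, \cite{cls}), where $u_{\tau''}$ is the primitive generator of the ray of $\sigma'$ not in $\sigma$; pairing with $m_\sigma$ and using $\langle m_\sigma,u_\rho\rangle=0$ on those wall rays yields $-1+\langle m_\sigma,u_{\tau''}\rangle=0$, so $\eta:=\tau''$ works with $\langle m_\sigma,u_\eta\rangle=1$. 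That $\langle m_\sigma,u_{\tau'}\rangle=0$ for \emph{all} remaining rays (not just those in $\sigma\cup\sigma'$) then follows because we already showed these pairings are $\geq 0$, and a further application of completeness/the relation $\sum$-argument (or directly: $m_\sigma$ is nef-associated so $\phi_{D_\tau}\leq\langle m_\sigma,\cdot\rangle$ globally with equality on $\sigma$, and $\phi_{D_\tau}(u_{\tau'})=0$ for $\tau'\neq\tau$ by property (2)) forces $\langle m_\sigma,u_{\tau'}\rangle\geq\phi_{D_\tau}(u_{\tau'})=0$ — so I need the reverse inequality, which I get from the earlier observation that $-m_\sigma\in P_{D_{\tau''}}$ by the same nef argument applied to $D_{\tau''}$ (also nef!), after identifying $-m_\sigma$ as the vertex datum for $D_{\tau''}$ on the cone $\sigma'$. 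This symmetric use of nefness of $D_{\tau''}$ is what pins down all the zeros.

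The main obstacle I expect is precisely this last point: showing the pairing $\langle m_\sigma,u_{\tau'}\rangle$ vanishes for \emph{every} ray outside $\{\tau,\eta\}$, not merely for rays lying in $\sigma$ or the neighboring cone $\sigma'$. The wall relation handles the local picture around $\sigma$, but a priori a distant ray could pair positively with $m_\sigma$. The resolution is to exploit that \emph{all} the divisors $D_{\tau'}$ are simultaneously nef (from the Proposition proved just above in the excerpt), so that one can run the $P_{D_{\tau'}}$-membership argument in both directions: $m_\sigma\in P_{D_\tau}$ gives $\langle m_\sigma,\cdot\rangle\geq 0$ off $\tau$, and identifying $-m_\sigma$ with the support-function datum of $D_\eta=D_{\tau''}$ on the adjacent maximal cone $\sigma'$ (which it is, by the computation $\langle -m_\sigma,u_\rho\rangle$ matching the coefficients of $D_\eta$ on $\sigma'(1)$) and invoking $-m_\sigma\in P_{D_\eta}$ gives $\langle -m_\sigma,\cdot\rangle\geq 0$ off $\eta$, i.e. $\langle m_\sigma,\cdot\rangle\leq 0$ off $\eta$. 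The two bounds together force $\langle m_\sigma,u_{\tau'}\rangle=0$ for all $\tau'\notin\{\tau,\eta\}$, completing the verification that $m_\sigma\in\kr\cap(-\kr)$.
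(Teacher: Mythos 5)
Your proof is correct, but it takes a genuinely different route from the paper's. The paper's argument is essentially two lines: it checks that $m_{\sigma}\in P_{D_{\tau}}$ via Proposition \ref{bas}, so that $m_{\sigma}$ is a Demazure root, and then appeals to the reductivity of $\Aut(X_{\Sigma})$ --- established earlier from the Griffiths semipositivity of the tangent bundle together with Mok's uniformization theorem --- to conclude that every Demazure root is automatically semisimple. You instead verify semisimplicity directly: you produce the witness ray $\eta$ for $-m_{\sigma}\in\kr$ from the wall relation $u_{\tau}+u_{\eta}=\sum b_{\rho}u_{\rho}$ across the facet of $\sigma$ opposite $u_{\tau}$, identify $-m_{\sigma}$ with the support-function datum of $D_{\eta}$ on the adjacent maximal cone $\sigma'$ (legitimate, since the two functionals agree on the basis $\sigma'(1)$ by smoothness), and then use the nefness of $D_{\eta}$ to get $\langle m_{\sigma},u_{\tau'}\rangle\leq 0$ off $\eta$, which combined with the first inequality pins down $\langle m_{\sigma},u_{\tau'}\rangle=0$ for all $\tau'\notin\{\tau,\eta\}$. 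What your approach buys is a purely combinatorial proof that bypasses the differential-geometric input (Theorem \ref{gri} plus Mok's theorem) entirely; what it costs is that you must invoke the nefness of all the divisors $D_{\tau'}$, not just of $D_{\tau}$ --- though this is available from the preceding proposition, and the paper's proof likewise uses more than the stated hypothesis, since reductivity rests on the full Fano-plus-nef-tangent assumption. One shared blemish: when $\tau\notin\sigma(1)$ the datum $m_{\sigma}$ is $0$ (it pairs to $0$ with every ray of $\sigma$, and these form a basis), and $0$ is not a Demazure root at all, so it is not ``vacuously semisimple'' as you claim; the paper's assertion that $\langle m_{\sigma},u_{\tau}\rangle=-1$ for every $\sigma$ has the same defect. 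Only the cones $\sigma$ containing $\tau$ yield genuine semisimple roots, which is all that is needed in Proposition \ref{ind}.
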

\begin{proof}  By the definition of the data $m_{\sigma}$ for the Cartier divisor $D_{\tau}$, we have $\phi_{D_{\tau}}(u_{\tau})=\langle m_{\sigma},u_{\tau}\rangle=-1.$  Now fix a $\sigma\in \Sigma(d).$ By (\ref{ddd}) of Proposition \ref{bas},    $m_{\sigma}\in P_{D_{\tau}}$ if $D_{\tau}$ is basepoint free. Note now
$$ P_{D_{\tau}}=\{m\in M|\langle m,u_{\tau}\rangle\geq -1 ~~{\rm and}~~\langle m,u_{\tau'}\rangle\geq 0,\forall \tau'\in \Sigma(1)\backslash\{\tau\}\},$$
 hence we have $$\langle m_{\sigma},u_{\tau'}\rangle\geq 0,\forall \tau'\in \Sigma(1)\backslash\{\tau\}\},$$
therefore $m_{\sigma}$ is a Demazure root of $\Aut (X_{\Sigma}).$ Since $\Aut (X_{\Sigma})$ is reductive, it is also a semisimple root.
\end{proof}

\begin{proposition}\label{ind} If $X_{\Sigma}$ is a toric Fano manifold with nef tangent bundle then  $\Aut (X_{\Sigma})$ has $d$ linearly independent semisimple roots.
\end{proposition}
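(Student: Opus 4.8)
The plan is to produce all $d$ linearly independent semisimple roots from a \emph{single} maximal cone of $\Sigma$, feeding the divisors $D_\tau$ into Proposition \ref{sm}. The point is that on a smooth complete fan one maximal cone already carries a full lattice basis, and Proposition \ref{sm} turns the support-function data of the $D_\tau$ at that cone into semisimple roots.

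Concretely, I would fix any maximal cone $\sigma\in\Sigma(d)$ (it exists since $\Sigma$ is complete) and write $\sigma(1)=\{\tau_1,\dots,\tau_d\}$. Because $X_\Sigma$ is a manifold, $\sigma$ is a smooth cone, so its minimal ray generators $u_{\tau_1},\dots,u_{\tau_d}$ form a $\Z$-basis of $N$. Next, for each $i$ I would consider the divisor $D_{\tau_i}$, which is nef by the proposition following Theorem \ref{tan}, and its support function $\phi_{D_{\tau_i}}$. On $\sigma$ this function is given by some $m_\sigma^{(i)}\in M$, and property (3) of support functions gives $\langle m_\sigma^{(i)},u_{\tau_j}\rangle=-a_{\tau_j}$, where $a_{\tau_j}$ is the coefficient of $D_{\tau_j}$ in the expansion of $D_{\tau_i}$; hence $\langle m_\sigma^{(i)},u_{\tau_j}\rangle=-\delta_{ij}$. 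Thus $\{m_\sigma^{(1)},\dots,m_\sigma^{(d)}\}$ is, up to sign, the $\Z$-basis of $M$ dual to $\{u_{\tau_1},\dots,u_{\tau_d}\}$; in particular, if $\sum_i c_i m_\sigma^{(i)}=0$, pairing with $u_{\tau_j}$ forces $c_j=0$, so these $d$ elements are linearly independent.

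Finally I would apply Proposition \ref{sm} to each nef divisor $D_{\tau_i}$ and the cone $\sigma$: it yields that every $m_\sigma^{(i)}$ is a semisimple Demazure root of $\Aut(X_\Sigma)$. This step quietly requires $\Aut(X_\Sigma)$ to be reductive, which holds here because $X_\Sigma$ is a toric Fano manifold with nef tangent bundle (as noted after the proof of Theorem \ref{gri}, via Mok's theorem \cite{mo}). Therefore $m_\sigma^{(1)},\dots,m_\sigma^{(d)}$ are $d$ linearly independent semisimple roots, which is the assertion. Together with Proposition \ref{red} this also closes the proof of Theorem \ref{main}.

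I do not expect a genuine obstacle here: once Proposition \ref{sm} and the reductivity of $\Aut(X_\Sigma)$ are available, the argument is essentially bookkeeping. The one step that must not be glossed over is the smoothness of $\sigma$, since that is exactly what guarantees the dual vectors $m_\sigma^{(i)}$ lie in $M$ and span it; and one should be careful with the sign conventions in the definition of $\kr$ and of the support-function data so that the identity $\langle m_\sigma^{(i)},u_{\tau_j}\rangle=-\delta_{ij}$ comes out with the right sign.
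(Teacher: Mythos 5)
Your proof is correct, but it takes a genuinely different and noticeably more economical route than the paper's. The paper argues globally: it introduces all the polytopes $P_{D_1},\dots,P_{D_m}$, shows they meet only at the origin and cover $P_{K^*}$, and then extracts $d$ independent roots by analyzing which facets of $P_{K^*}$ pass through a chosen vertex $v$; that argument ends with the assertion that if $m^{i_1}_{\sigma}-v,\dots,m^{i_d}_{\sigma}-v$ are linearly independent then ``after a translation'' so are $m^{i_1}_{\sigma},\dots,m^{i_d}_{\sigma}$ --- a step that needs extra justification, since translation preserves affine but not linear independence. You instead work locally at a single smooth maximal cone $\sigma$: the Cartier data of the $d$ divisors $D_{\tau_1},\dots,D_{\tau_d}$ attached to the rays of $\sigma$ satisfies $\langle m_\sigma^{(i)},u_{\tau_j}\rangle=-\delta_{ij}$, so these elements form (up to sign) the $\Z$-basis of $M$ dual to $u_{\tau_1},\dots,u_{\tau_d}$ and are trivially independent, and Proposition \ref{sm} plus reductivity makes them semisimple roots. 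You correctly isolate the two inputs doing the work: smoothness of $\sigma$ (so the dual vectors lie in $M$) and nefness of each $D_{\tau_i}$ (so $m_\sigma^{(i)}\in P_{D_{\tau_i}}$, giving $\langle m_\sigma^{(i)},u_{\tau'}\rangle\geq 0$ on the rays outside $\sigma$), and you rightly flag that reductivity comes from the Fano hypothesis via Theorem \ref{gri} and Mok's theorem. One incidental merit of your version: you only ever apply Proposition \ref{sm} to cones $\sigma$ containing the ray $\tau$ in question, which is exactly the case where its proof is valid (for $\tau\notin\sigma(1)$ the datum $m_\sigma$ of $D_\tau$ is $0$ and is not a root, so the proposition as literally stated is slightly too broad). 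In short, your argument replaces the paper's vertex-and-facet bookkeeping with a one-cone duality computation and avoids its shakiest step.
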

\begin{proof} Let $\tau_1,\cdots,\tau_m$ denote all  of $1$-dimensinal cones of $\Sigma$ and $u_{\tau_1},\cdots,u_{\tau_m}$ their primitive generating vectors,
 and $D_1,\cdots,D_m$ the corresponding $T_N$-invariant basepoint free Cartier divisors. The support function $\phi_{D_i}$ of $D_i$ satisfies that
$$\phi_{D_i}(x)=\langle m_{\sigma},x\rangle,\quad \forall x\in \sigma\in\Sigma(d),$$
where $\langle m_{\sigma},u_{\tau_i}\rangle=-1$ and $\langle m_{\sigma},u_{\tau_j}\rangle\geq 0$ for $j\not=i.$ Let $\{m^i_{\sigma}\}$ be the set of semisimple Demazure roots associated with the Cartier divisor $D_i.$
   Since
each $D_i$ is a basepoint free divisor, the associated polytope
$$P_{D_i}=\{m\in M_{\R}|\langle m,u_{\tau_i}\rangle\geq -1~~{\rm and}~~ \langle m,u_{\tau_j}\rangle\geq 0~~\forall~~j\not=i\}$$
is a convex polytope. Note for $i\not=j,$
$$P_{D_i}\cap P_{D_j}=\{m\in M_{\R}| \langle m,u_{\tau_j}\rangle\geq 0~~{\rm for}~~j=1,\cdots,m\}={\rm pos}(\tau_1,\cdots,\tau_m)^{\vee}$$
is $\{0\}$, since $\Sigma$ is complete the convex cone ${\rm pos}(\tau_1,\cdots,\tau_m)=N_{\R}.$

The anticanonical divisor of $X_{\Sigma}$ is given by $K^*=D_1+\cdots+D_m$ and it is an ample divisor, the associated polytope
$$P_{K^*}=\{m\in M_{\R}|\langle m,u_{\tau_i}\rangle\geq -1~~{\rm for}~~i=1,\cdots,m\}$$
is a full  dimensional polytope by Proposition \ref{str}. Note that
$$P_{K^*}=P_{D_1}\cup\cdots\cup P_{D_m}.$$
Since $0\not=m^i_{\sigma}\in P_{D_i},$ we have for any $\sigma,\sigma'\in\Sigma(d)$ that $m^i_{\sigma}\not=m^j_{\sigma'}$ if $i\not=j.$ Note $\{m^i_{\sigma}\}$ are vertices  of $P_{D_i},$ however none of them are vertices of $P_{K^*}$ though $\{m^i_{\sigma}|\sigma\in\Sigma(d)\} \subset P_{K^*}.$ In fact  $m^i_{\sigma}$ can't  lie in the intersection of two facets of $P_{K^*},$ hence it is not inside the facets  with codimension $\geq 2$ of $P_{K^*}.$
 But each $m^i_{\sigma}$ is in the codimensional one facet $H_i=\{x\in M|\langle m,u_{\tau_i}\rangle= -1\}$ of $P_{K^*},$ and for $i\not=j,$ the points $\{m^i_{\sigma}|\sigma\in\Sigma(d)\}$ and $\{m^j_{\sigma}|\sigma\in\Sigma(d)\}$ locate in the different facets of $P_{K^*}.$

 Now let $v$ be any vertex of $P_{K^*}$ which has at least $d$ codimensional one facets of $P_{K^*},$ assume $H_{i_1},\cdots,H_{i_k}(k\geq d)$ are those facets passing through the vertex $v$ and $H_{i_1}\cap\cdots\cap H_{i_k}=\{v\}.$ Now fix a cone $\sigma\in \Sigma(d),$ since $\{m\in M|\langle m,u_{\tau_{i_j}}\rangle\geq  -1,j=1,\cdots,k\}$ is a $d$-dimensional cone ${\rm Cone}(P_{K^*}\cap M_{\R}-v),$ the vectors
 $m^{i_1}_{\sigma}-v,\cdots,m^{i_k}_{\sigma}-v$  form a basis of $N_{\R}.$ Since $m^{i_1}_{\sigma},\cdots,m^{i_k}_{\sigma}$ are on the different facets of cone
 ${\rm Cone}(P_{K^*}\cap M_{\R}-v),$
 without loss of generality we may assume $m^{i_1}_{\sigma}-v,\cdots,m^{i_d}_{\sigma}-v$are linearly independent. Then after a translation, $m^{i_1}_{\sigma},\cdots,m^{i_d}_{\sigma}$ are still linearly independent. By Proposition \ref{sm},$m^{i_1}_{\sigma},\cdots,m^{i_d}_{\sigma}$ are semisimple Demazure roots of
 ${\rm Aut}(X_{\Sigma}),$ hence it has $d$ linearly independent semisimple roots.
\end{proof}

Now  our main result Theorem \ref{main} follows from Proposition \ref{red} and Proposition \ref{ind}.

 \end{document}